\newtheorem{theorem}{Theorem}[section]
\newtheorem{lemma}[theorem]{Lemma}
\theoremstyle{definition}
\newtheorem{definition}[theorem]{Definition}
\newtheorem{example}[theorem]{Example}
\newtheorem{proposition}[theorem]{Proposition}
\newtheorem{remark}[theorem]{Remark}
\newtheorem{corollary}[theorem]{Corollary}
\newtheorem{claim}[theorem]{Claim}
\numberwithin{equation}{section}
\begin{document}
\date{}
\title{The Combinatorial Norm of a Morphism of Schemes}
\author{Feng-Wen An}
\address{School of Mathematics and Statistics, Wuhan University, Wuhan,
Hubei 430072, People's Republic of China}

\address{\textsl{and}}

\address{Faculty of Mathematics and Computer Science, Hubei University, Wuhan,
Hubei 430062, People's Republic of China}

\email{fwan@amss.ac.cn} \subjclass[2000]{Primary 14A15; Secondary
05C30, 13E99, 14M05, 55M10.} \keywords{algebraic variety, graph
theory, morphism, norm, scheme.}

\begin{abstract}
In this paper we will prove that there exists a covariant functor
from the category of schemes to the category of graphs. This functor
provides a combination between algebraic varieties and combinatorial
graphs so that the invariants defined on graphs can be introduced to
algebraic varieties in a natural manner. By the functor, we will
define the combinatorial norm of a morphism of schemes. Then we will
obtain some properties of morphisms of norm not great than one. The
topics discussed here can be applied to study the discrete Morse
theory on arithmetic schemes and Kontsevich's theory of graph
homology.
\end{abstract}

\maketitle

\section*{Introduction}

In this paper we will demonstrate a type of combinatorial properties
of algebraic varieties from the viewpoint of graph theory.

In \S 1 we will first prove that there exists a covariant functor
$\Gamma $, called the graph functor, from the category $Sch$ of
schemes to the category $Grph$ of graphs. Here the trick is
fortunately built on Weil's specializations$^{[25]}$.

This functor $\Gamma $ provides a combination between algebraic
varieties and combinatorial graphs (See \emph{Theorem 1.6}). By the
graph functor $\Gamma$, to each scheme $X$, assign a combinatorial
graph $\Gamma(X)$; to each morphism $f:X \rightarrow Y$ of schemes,
assign a homomorphism $\Gamma(f):\Gamma(X) \rightarrow \Gamma(Y)$ of
combinatorial graphs.

For example, it is easily seen that the combinatorial graph
$\Gamma(Spec(\mathbb{Z}))$ of $Spec(\mathbb{Z})$ is a tree$^{[9]}$,
i.e., a star-shaped graph with the generic point as the center.
Thus, the illustrations of $Spec\left( \mathbb{Z}\right) $ in
[19,21] are not \textquotedblleft correct\textquotedblright .

The practical application of the graph functor $\Gamma$ is that in a
natural manner we can exactly introduce into algebraic varieties the
invariants that are defined on combinatorial graphs and have been
studied in recent decades such as discrete Morse
theory$^{[3-8,14,15,20,23]}$ and graph homology$^{[13,16-18]}$.

However, in general, the combinatorial graphs of most schemes are
not finite; some typical schemes rising from arithmetics (for
example, see [11]) are of infinite dimensions and hence their graphs
are infinite. It follows that there is a situation in which it is
necessary for one to set up some combinatorial quantity for a
morphism of schemes to describe its some property and from its local
data to attempt to obtain its global behavior.

Thus, in \S 2 we will introduce the definition for a combinatorial
norm of a morphism of schemes. The norm of a morphism is defined by
the graph functor $\Gamma$ in an evident manner (See
\emph{Definition 2.1}). The range of norms of morphisms can be any
non-negative integers. For example, a morphism from a scheme to a
zero-dimensional scheme has norm zero; an isomorphism of schemes has
norm one; a length-preserving morphism has norm one; an injective
morphism can have a norm of more than one (See \emph{Example 2.2},
\emph{Remark 2.5}, and \emph{Corollary 2.10}).

We will then conduct an extensive study on some particular types of
 morphisms of schemes by means of combinatorial graphs and their
 lengths.
 As a common characteristic, all these morphisms have
the norms of not great than one. In \emph{Proposition 2.6} we will
give an application of the length-preserving morphism.

It is easily seen that \textquotedblleft injective
$\nLeftrightarrow$ length-preserving\textquotedblright and that
\textquotedblleft norm one $\nLeftrightarrow$
injective\textquotedblright for a morphism of schemes (See
\emph{Remark 2.5}). So in \emph{Theorem 2.7} we will give a
sufficient condition to a morphism of schemes whose norm is not
greater than one. And in \emph{Theorem 2.8} we will obtain a
comparison between injective and length-preserving morphisms of
schemes.

The results on norms of morphisms between schemes, discussed in the
paper, can be applied to topics on the discrete Morse theory on
arithmetic schemes and Kontsevich's theory of graph homology [for
example, see our subsequent paper].

Finally, I would like to express my sincere gratitude to Professor
Li Banghe for his invaluable advice on algebraic geometry and
topology. I am also indebted to Dr Yuji Odaka (Tokyo University) for
pointing out an error in an earlier version of the preprint of the
paper.

\section{The Combinatorial Graph Of A Scheme}

\subsection{Notation}

Let us recall some definitions in [10,25]. Let $E$ be a topological
space $E$ and $x,y\in E$. If $y$ is in the closure $\overline{\{x\}}$, $y$ is a \textbf{%
specialization} of $x$ (or, $x$ is said to be a \textbf{generalization} of $%
y $) in $E$, denoted by $x\rightarrow y$. Put $Sp\left( x\right)
=\{y\in E\mid x\rightarrow y\}$.
 It is evident that $Sp\left( x\right) =\overline{\{x\}}$ is an irreducible closed
 subset in $E$.

 If $x\rightarrow
y$ and $y\rightarrow x$ both hold in $E$, $y$ is a \textbf{generic
specialization} of $x$ in $E$, denoted by $x\leftrightarrow y$. The point $x$
is \textbf{generic} (or \textbf{initial}) in $E$ if we have $%
x\leftrightarrow z$ for any $z\in E$ such that $z\rightarrow x$. And $x$ is
\textbf{closed} (or \textbf{final}) if we have $x\leftrightarrow z$ for any $%
z\in E$ such that $x\rightarrow z.$ We say that $y$ is a
\textbf{closest specialization}
of $x$ in $X$ if either $z=x$ or $z=y$ holds for any $z\in X$ such that $%
x\rightarrow z$ and $z\rightarrow y$.

\subsection{Any specialization is contained in an affine open set}

Let $E=Spec\left( A\right) $ be an affine scheme. For any point
$z\in Spec\left( A\right)$, denote by $j_{z}$ the corresponding
prime ideal in $A$. It is clear that there is a specialization
$x\rightarrow y$ in $Spec\left( A\right) $ if and only if
$j_{x}\subseteq j_{y}$ in $A$. It follows that there is a generic
specialization $x\leftrightarrow y$ in $Spec\left( A\right) $ if and
only if $x=y$. Now given a scheme $X$.

\begin{lemma}
Let $x,y \in X$. Then we have $x\leftrightarrow y$ in $X$ if and
only if $x=y$.
\end{lemma}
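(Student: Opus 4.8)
The plan is to reduce to the affine case, which has already been settled in the discussion preceding the statement: in $Spec(A)$ one has $x\leftrightarrow y$ if and only if $j_{x}\subseteq j_{y}$ and $j_{y}\subseteq j_{x}$, i.e. $j_{x}=j_{y}$, i.e. $x=y$. The ``if'' direction of the lemma is immediate, so the whole content lies in the ``only if'' direction.

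Assume $x\leftrightarrow y$ in $X$, that is, $x\in\overline{\{y\}}$ and $y\in\overline{\{x\}}$. First I would choose an affine open neighbourhood $U=Spec(A)$ of $x$ in $X$; such a $U$ exists because $X$ is a scheme. The key observation is that $y$ automatically lies in $U$: if not, then $X\setminus U$ would be a closed subset of $X$ containing $y$, hence containing $\overline{\{y\}}$, hence containing $x$, contradicting $x\in U$. (More generally, the same argument shows that any specialization $x\rightarrow y$ is contained in an affine open set, which is the assertion of the subsection heading.)

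Next I would use that the specialization relation is compatible with passage to an open subspace: for an open set $U$ and a point $x\in U$ one has $\overline{\{x\}}\cap U=\overline{\{x\}}^{\,U}$, the closure taken inside $U$. Consequently, since both $x$ and $y$ now lie in $U$, the relations $x\in\overline{\{y\}}$ and $y\in\overline{\{x\}}$ computed in $X$ are equivalent to the same relations computed in $U=Spec(A)$. Hence $x\leftrightarrow y$ holds in $Spec(A)$, and the affine computation recalled above forces $j_{x}=j_{y}$, so $x=y$.

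There is no serious obstacle here; the only point requiring a little care is the first one — checking that an affine open containing $x$ is forced to contain $y$ — and this is precisely where the hypothesis $x\in\overline{\{y\}}$ (and not merely $y\in\overline{\{x\}}$) enters. Everything else is a routine transfer of the already-known affine statement along an open immersion.
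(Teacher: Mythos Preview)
Your proof is correct and follows essentially the same route as the paper: pick an affine open $U$ containing $x$, argue that the hypothesis $x\in\overline{\{y\}}$ forces $y\in U$, and then invoke the already-established affine case. The paper phrases the step ``$y\in U$'' via the equality $Sp(x)=Sp(y)$ and the observation $x\in Sp(y)\cap U$, while you phrase it by the contrapositive with the closed complement $X\setminus U$; these are the same argument, and your version is arguably more transparent.
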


\begin{proof}
$\Leftarrow$. Trivial. Prove $\Rightarrow$. Assume $x\leftrightarrow
y$ in $X$. Let $U$ be an affine open set of $X$ containing $x$.
From $x\leftrightarrow y$ in $X$, we have ${Sp}(x)={Sp}(y)$; then $x\in{Sp}%
(x)\bigcap U={Sp}(y)\bigcap U \ni y$, that is, $y$ is contained in
$U$. Hence, $x\leftrightarrow y $
in $U$. It follows that $x=y$ holds in $U$ (and of course in $%
X$).
\end{proof}

\begin{lemma}
Any specialization $x\rightarrow y$ in $X$ is contained in some
affine open subset $U$ of $X$, that is, the points $x,y$ are both
contained in $U$. In particular, each affine open set of $X$
containing $y$ must contain $x$.
\end{lemma}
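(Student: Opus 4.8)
The plan is to deduce both assertions at once from a single topological observation: a closed subset of $X$ that contains a point $x$ must contain the whole closure $\overline{\{x\}}$, since $\overline{\{x\}}$ is by definition the smallest closed set through $x$. Because $X$ is a scheme, the point $y$ admits an affine open neighborhood $U\subseteq X$; I shall show that this $U$ automatically contains $x$ as well. That simultaneously exhibits the desired affine open set and proves the ``in particular'' clause, since $U$ was chosen to be an arbitrary affine open containing $y$.

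So suppose, for contradiction, that $x\notin U$. Then $x$ lies in the complement $X\setminus U$, which is a closed subset of $X$. By the observation above, $\overline{\{x\}}\subseteq X\setminus U$; equivalently $Sp(x)\subseteq X\setminus U$, using the identity $Sp(x)=\overline{\{x\}}$ recorded in the Notation subsection. But the hypothesis $x\rightarrow y$ means precisely that $y\in\overline{\{x\}}$, hence $y\in X\setminus U$, contradicting $y\in U$. Therefore $x\in U$, and $U$ is an affine open subset of $X$ containing both $x$ and $y$.

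I expect no genuine obstacle here: once the definition of specialization is unwound to the statement $y\in\overline{\{x\}}$, the conclusion is forced by the elementary facts that complements of open sets are closed and that a closure is the smallest closed set containing its point. The only point to watch is to use nothing about $U$ beyond its being open — affineness enters merely to match the wording of the conclusion — so that in fact the same argument shows that \emph{every} open subset of $X$ containing $y$ contains $x$, and the affine version is then just the special case provided by the existence of affine neighborhoods in a scheme.
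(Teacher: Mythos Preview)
Your proof is correct and follows essentially the same route as the paper's: both arguments amount to the elementary fact that any open neighborhood of a point $y\in\overline{\{x\}}$ must contain $x$. The paper phrases this via the definition of a limit point (and therefore has to assume $x\neq y$ at the outset), whereas your closure-minimality/complement argument handles the case $x=y$ uniformly and is slightly cleaner; otherwise the two proofs coincide.
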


\begin{proof}
Take a specialization $x\rightarrow y$ in $X$ with $x \not= y$. Then
$y$ is a limit point of the one-point set $\{x\}$ since $y$ is
contained in the topological closure $Sp(x)$ of $\{x\}$. Let
$U\subseteq X$ be an open set containing $y$. We have $U \bigcap
(\{x\}\setminus \{y\})\neq \emptyset$ by the definition for a limit
point of a set (see any standard textbook for general topology). We
choose $U$ to be an affine open set of $X$.
\end{proof}

\subsection{Any morphism preserves specializations}

Let $E$ be a topological space and let $IP(W)$ be the set of the
generic points in a subset $W$ of $E$.

$E$ is said to be of the $\left( UIP\right)-$\textbf{property} if
$E$ satisfies the conditions:

$(i)$  $IP(W)$ is a nonvoid set for any nonvoid irreducible closed
subset $W$ of $E$; $(ii)$ for any irreducible closed subset $V$ and
$W$ of $E$ with $V\neq W$, there is $x_{V}\neq x_{W}$ for any
$x_{V}\in IP(V)$ and any $x_{W}\in IP(W)$.

Let $f:E\rightarrow F$ be a map of spaces. Then
$f$ is said to be $IP-$\textbf{preserving} if we have $%
f\left( x_{0}\right)\in IP(\overline{f(U)}) $ for any closed subset
$U$ of $E$ and any $x_{0}\in IP(U)$. Here $\overline{f(U)}$ denotes
the topological closure of the set ${f\left( U\right) }$.

The map $f$ is said to be \textbf{%
specialization-preserving} if there is a specialization $f\left(
x\right) \rightarrow f\left( y\right) $ in $F$ for any
specialization $x\rightarrow y$ in $E$.

\begin{remark}
By Zorn's Lemma it is seen that any irreducible $T_{0}-$spaces have
the $\left( UIP\right) -$property if there are generic points. In
particular, any scheme is of the $(UIP)-$property.
\end{remark}

\begin{proposition}
Let $f:E\rightarrow F$ be a continuous map of topological spaces.

$\left( i\right) $ $f$ is specialization-preserving if and only if $f$ is $%
IP-$preserving.

$\left( ii\right) $ Let $F$ be of the $\left( UIP\right)-$property.
Then $f$ is specialization-preserving.
\end{proposition}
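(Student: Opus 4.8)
The plan is to reduce both parts to one elementary fact: a continuous map satisfies $f(\overline{A})\subseteq\overline{f(A)}$ for every subset $A\subseteq E$. Taking $A=\{x\}$ this reads $f(Sp(x))\subseteq Sp(f(x))$, and since $f(x)\in f(Sp(x))$ the reverse inclusion of closures holds as well, so $\overline{f(Sp(x))}=Sp(f(x))$. Two consequences: (a) if $x\rightarrow y$ in $E$ then $f(y)\in f(Sp(x))\subseteq Sp(f(x))$, i.e. $f(x)\rightarrow f(y)$ — so every continuous map is specialization-preserving, which is already the content of (ii) (when $F$ has the $(UIP)$-property one may moreover re-run (i) below to see that $f$ is even $IP$-preserving); and (b) every point $x$ is a generic point of its own closure, so $x\in IP(Sp(x))$.

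For the \textquotedblleft if\textquotedblright\ half of (i), assume $f$ is $IP$-preserving and take a specialization $x\rightarrow y$ in $E$. Applying the hypothesis to the irreducible closed set $U=Sp(x)$, for which $x\in IP(U)$ by (b), gives $f(x)\in IP(\overline{f(U)})$; but $\overline{f(U)}=\overline{f(Sp(x))}=Sp(f(x))$, and $y\in U$ forces $f(y)\in f(U)\subseteq Sp(f(x))$, i.e. $f(x)\rightarrow f(y)$, as wanted. For the \textquotedblleft only if\textquotedblright\ half, assume $f$ is specialization-preserving and let $U$ be an irreducible closed subset of $E$ with $x_{0}\in IP(U)$. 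The key point is that then $Sp(x_{0})=U$: since $x_{0}$ is generic in $U$, nothing in $U$ properly generalizes it, and with $U$ irreducible this forces $U=Sp(x_{0})$ — this is where the structure of the underlying spaces is spent, through the existence of generic points (cf. the remark above that every scheme has the $(UIP)$-property; for the statement over arbitrary topological spaces one should read \textquotedblleft closed subset\textquotedblright\ in the definition of $IP$-preserving as \textquotedblleft irreducible closed subset\textquotedblright). Granting $Sp(x_{0})=U$, we get $\overline{f(U)}=\overline{f(Sp(x_{0}))}=Sp(f(x_{0}))$, and by (b) applied to the point $f(x_{0})$ this yields $f(x_{0})\in IP(\overline{f(U)})$, so $f$ is $IP$-preserving.

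The step I expect to be the real obstacle is precisely the identification $Sp(x_{0})=U$ for $x_{0}\in IP(U)$ — equivalently, ruling out that $\overline{f(U)}$ properly contains $Sp(f(x_{0}))$ and holds a point $w$ that properly generalizes $f(x_{0})$, which would destroy $f(x_{0})\in IP(\overline{f(U)})$. For an irreducible $U$ in a sober space, in particular for schemes, this is immediate; for a reducible $U$, or over spaces lacking enough generic points, one must either restrict to irreducible $U$ or decompose $U$ into its irreducible components and argue component-by-component, using the $(UIP)$-type hypotheses. Everything else is a formal unwinding of $f(\overline{A})\subseteq\overline{f(A)}$, and (i) together with consequence (a) then gives (ii).
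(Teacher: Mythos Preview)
Your argument is correct and, for part~(ii), actually sharper than the paper's. The paper proves~(ii) by invoking the $(UIP)$-property of $F$ to force $\overline{f(Sp(x))}=Sp(f(x))$: it observes that both sets are irreducible closed and contain $f(x)$ as a generic point, and then appeals to uniqueness. You bypass this entirely with the elementary fact $f(\overline{\{x\}})\subseteq\overline{\{f(x)\}}$, which already gives $f(Sp(x))\subseteq Sp(f(x))$ and hence specialization-preservation for \emph{any} continuous map, with no hypothesis on $F$ whatsoever. This is a genuine simplification; it also makes the ``if'' half of~(i) essentially automatic, as you implicitly note.

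For the ``only if'' half of~(i), your route and the paper's coincide: both reduce to showing that if $x_{0}\in IP(U)$ with $U$ irreducible closed, then $U=Sp(x_{0})$, and then use $\overline{f(Sp(x_{0}))}=Sp(f(x_{0}))$. The obstacle you single out is real --- with the paper's definition of ``generic'' (no proper generalization), an irreducible closed set in a non-sober space can have generic points whose closure is strictly smaller (e.g.\ the cofinite topology on an infinite set, where every point is generic but closed). The paper's proof silently assumes the same identification when it writes ``for any $x\in U$ there is a specialization $x_{0}\rightarrow x$''; you are simply being explicit about where the argument leans on sober-type behaviour. For schemes, which is the only case used later, there is no issue.
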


\begin{proof}
$\left( i\right) $ $\Rightarrow$. Let $f$ be
specialization-preserving. Take any closed subset $U$ of $E$ and any
$x_{0}\in IP(U)$. Without loss of generality, we assume that $U$ is
irreducible.

For any $x\in U$, there is a specialization $x_{0}\rightarrow x$ in
$U$. From the assumption we have a specialization
$f(x_{0})\rightarrow f(x)$ in $f(U)$; then $f(x_{0})\rightarrow y$
in $f(U)$ holds for any $y\in f(U)$. Put
$$Sp(f(x_{0}))\mid _{f(U)}=\{z\in f(U)\mid f(x_{0})\rightarrow z
\text{ in } f(U)\}.$$

As $Sp(f(x_{0}))\mid _{f(U)}=f(U)$, we have
$$Sp(f(x_{0}))=\overline{Sp(f(x_{0}))\mid
_{f(U)}}=\overline{f(U)}.$$
It follows that for any $z\in \overline{f(U)}$ there is a
specialization $f(x_{0})\rightarrow z$ in $\overline{f(U)}$. Hence,
$$f(x_{0})\in IP(\overline{f(U)}).$$

$\Leftarrow$. Let $f$ be IP-preserving. Take any specialization
$x_{0}\rightarrow x$ in $E$. Let $U=Sp(x_{0})$. We have
$f(x_{0}),f(x) \in f(U)$; then $$f(x)\in
\overline{f(U)}=Sp(f(x_{0}));$$ hence there is a specialization
$f(x_{0})\rightarrow f(x)$ in $F$.

$\left( ii\right) $ Fixed any specialization $x\rightarrow y$ in $E$%
. From the irreducibility of $Sp\left( x\right)$, it is seen that
$\overline{ f\left( Sp\left( x\right) \right) }$ is an irreducible
closed subset in $F$. As $f\left( x\right) \in f\left( Sp\left(
x\right) \right)$, there is $$ Sp\left( f\left( x\right) \right)
\subseteq\overline{f\left( Sp\left( x\right) \right) };$$ as $F$ has
the $\left( UIP\right) -$property, it is seen that $$
\overline{f\left( Sp\left( x\right) \right) }=Sp\left( f\left(
x\right) \right) $$ holds since they both contain $f(x)$ as a
generic point. Similarly, we have $$ \overline{f\left( Sp\left(
y\right) \right) }=Sp\left( f\left( y\right) \right).$$

As $Sp\left( x\right) \supseteq Sp\left( y\right) ,$ we have $
f\left( Sp\left( x\right) \right) \supseteq f\left( Sp\left(
y\right) \right)$; then $ Sp\left( f\left( x\right) \right)
\supseteq Sp\left( f\left( y\right) \right)$. So there is a
specialization $f\left( x\right) \rightarrow f\left( y\right) $ in
$F$.
\end{proof}

For the case of schemes, we have the following lemma.

\begin{lemma}
Any morphism of schemes is specialization-preserving.
\end{lemma}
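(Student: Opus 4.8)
The plan is to reduce the statement to Proposition 1.4(ii), which says that any continuous map into a space with the $(UIP)$-property is specialization-preserving. Let $f:X\to Y$ be a morphism of schemes. A morphism of schemes is in particular a continuous map of the underlying topological spaces, so $f$ meets the hypothesis of Proposition 1.4 that concerns continuity. It therefore remains only to verify that the target space $Y$ has the $(UIP)$-property, and then invoke part (ii) directly.

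For the $(UIP)$-property of $Y$, I would appeal to Remark 1.3, which already records that \emph{any scheme is of the $(UIP)$-property}. Concretely, this is because the underlying space of a scheme is sober: every nonvoid irreducible closed subset $W\subseteq Y$ has a unique generic point, so $IP(W)$ is a nonvoid (indeed singleton) set, giving condition $(i)$; and if $V\neq W$ are irreducible closed subsets, their unique generic points $x_V, x_W$ satisfy $\overline{\{x_V\}}=V\neq W=\overline{\{x_W\}}$, hence $x_V\neq x_W$, giving condition $(ii)$. Thus $Y$ satisfies both clauses in the definition of the $(UIP)$-property.

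Having established that $Y$ is a topological space with the $(UIP)$-property and that $f$ is continuous, Proposition 1.4(ii) applies verbatim and yields that $f$ is specialization-preserving: for any specialization $x\to y$ in $X$ there is a specialization $f(x)\to f(y)$ in $Y$. This completes the proof.

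I do not anticipate a genuine obstacle here; the lemma is essentially a corollary of Proposition 1.4 together with the soberness of schemes recorded in Remark 1.3. The only point requiring a word of care is the soberness fact itself — that irreducible closed subsets of a scheme have unique generic points — but this is standard (it holds affine-locally for $\mathrm{Spec}(A)$ and glues), and in any case the excerpt has already licensed its use via Remark 1.3, so no further argument is needed.
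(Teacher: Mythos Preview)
Your proposal is correct and follows exactly the same route as the paper: the paper's proof is the one-line remark that the lemma is immediate from \emph{Remark 1.3} (schemes have the $(UIP)$-property) and \emph{Proposition 1.4}(ii) (continuous maps into $(UIP)$-spaces are specialization-preserving). Your write-up simply unpacks these two ingredients in more detail, including the soberness justification behind Remark 1.3, but the logical structure is identical.
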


\begin{proof}
It is immediate from \emph{Remark 1.3} and \emph{Lemma 1.4}.
\end{proof}

\subsection{The graph functor $\Gamma$ from schemes to graphs}

Now we have such a covariant functor from the category of schemes to
the category of graphs.

\begin{theorem}
There exists a covariant functor $\Gamma$ from the category $Sch$ of
schemes to the category $Grph$ of graphs defined in such a natural
manner:

$(i)$ To each scheme $X$, assign the graph $\Gamma(X)$ in which the
vertex set is the set of points in the underlying space $X$ and the
edge set is the set of specializations in $X$.

Here, for any points $x,y \in X$, we say that there is an edge from
$x$ to $y$ if and only if there is a specialization $x\rightarrow y$
in $X$.

$(ii)$ To each morphism $f:X\rightarrow Y$ of schemes, assign the
homomorphism $\Gamma(f): \Gamma(X)\rightarrow \Gamma(Y)$ of graphs
such that any specialization $x \rightarrow y$ in the scheme $X$ as
an edge in $\Gamma(X)$, is mapped by $\Gamma(f)$ into the
specialization $f(x)\rightarrow f(y)$ as an edge in $\Gamma(Y)$.
\end{theorem}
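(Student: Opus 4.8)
The plan is to verify that the two assignments in $(i)$ and $(ii)$ are well-defined and that they respect composition and identities, so that $\Gamma$ is genuinely a covariant functor $Sch \to Grph$. For part $(i)$, the assignment on objects is unambiguous once we fix a convention: the vertex set of $\Gamma(X)$ is the underlying point set of $X$, and we place a directed edge $x \to y$ exactly when $y \in \overline{\{x\}}$, i.e. when $x \to y$ is a specialization in $X$. Nothing needs to be proved here beyond observing that this produces a well-defined directed graph (loops $x \to x$ are present, corresponding to the trivial specialization; by Lemma~1.1 there are no pairs of distinct mutually adjacent vertices, so the only $2$-cycles are degenerate). I would state this explicitly so the graph-theoretic object is pinned down.

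The substance of the proof is part $(ii)$: that a morphism of schemes $f : X \to Y$ induces a homomorphism of graphs. First I would recall that a morphism of schemes is in particular a continuous map of underlying topological spaces, and then invoke \emph{Lemma 1.5} (any morphism of schemes is specialization-preserving): for every specialization $x \to y$ in $X$ there is a specialization $f(x) \to f(y)$ in $Y$. Translating through the dictionary of $(i)$, this says precisely that every edge $x \to y$ of $\Gamma(X)$ is carried to an edge $f(x) \to f(y)$ of $\Gamma(Y)$; together with the vertex map $x \mapsto f(x)$ this is by definition a homomorphism of graphs $\Gamma(f) : \Gamma(X) \to \Gamma(Y)$. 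So the only real input is \emph{Lemma 1.5}, which itself rests on \emph{Remark 1.3} and \emph{Proposition 1.4(ii)} already established above.

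It then remains to check functoriality. For the identity morphism $\mathrm{id}_X : X \to X$, the induced vertex map is the identity on points and hence $\Gamma(\mathrm{id}_X) = \mathrm{id}_{\Gamma(X)}$. For composability, given morphisms $f : X \to Y$ and $g : Y \to Z$, the underlying map of $g \circ f$ is the set-theoretic composite $x \mapsto g(f(x))$, so $\Gamma(g \circ f)$ and $\Gamma(g) \circ \Gamma(f)$ agree on vertices; since a graph homomorphism is determined by its effect on vertices (an edge $x \to y$ goes to the edge between the images, which is forced), they agree as homomorphisms. This establishes $\Gamma(g \circ f) = \Gamma(g) \circ \Gamma(f)$ and completes the verification that $\Gamma$ is a covariant functor.

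I do not expect a genuine obstacle here, since all the topological content has been front-loaded into \emph{Lemma 1.5}; the one point deserving care is the well-definedness of $\Gamma(f)$ as a \emph{graph} homomorphism — namely checking that the edge condition is preserved, not merely that $f$ is a map of sets — and that is exactly where \emph{Lemma 1.5} is used. A secondary point worth a sentence is that, because morphisms of schemes need not be injective on points, $\Gamma(f)$ may collapse distinct vertices and distinct edges; this is permitted by the definition of a homomorphism of (directed) graphs and is consistent with the later notion of the combinatorial norm.
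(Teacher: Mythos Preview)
Your proposal is correct and follows essentially the same approach as the paper: the paper's proof consists of the single line ``It is immediate from \emph{Lemmas 1.1} and \emph{1.5},'' and you invoke exactly these two lemmas for the same purposes (Lemma~1.1 to rule out nontrivial $2$-cycles, Lemma~1.5 to show edges are preserved). Your version is simply more explicit, spelling out the verification of identities and composition that the paper leaves implicit in the word ``functor.''
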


\begin{proof}
It is immediate from \emph{Lemmas 1.1} and \emph{1.5}.
\end{proof}

The above functor $\Gamma$ from the category $Sch$ of schemes to the
category $Grph$ of graphs is said to be the \textbf{graph functor}
in the paper. For a scheme $X$, the graph $\Gamma(X)$ is said to the
(\textbf{associated}) \textbf{graph} of $X$; for a morphism
$f:X\rightarrow Y$ of schemes, the graph homomorphism $\Gamma(f)$ is
said to be the (\textbf{associated}) \textbf{homomorphism} of $f$.

In general, the graph $\Gamma(X)$ of a scheme $X$ is not a finite
graph. For example, the graph $\Gamma(Spec(\mathbb{Z}))$ of
$Spec(\mathbb{Z})$ is a star-shaped graph. The graph
$\Gamma(Spec(\mathbb{Z}[t]))$ of $Spec(\mathbb{Z}[t])$ is a graph of
infinitely many loops.

\begin{remark}
By the graph functor $\Gamma$, many invariants defined on graphs can
be introduced into algebraic varieties in a natural manner, such as
the discrete Morse theory$^{[3-8,15,23]}$ and the Kontsevich's graph
homology$^{[13,16-18]}$.

The graph functor $\Gamma$ can provide us some type of completions
of birational maps between algebraic varieties.
\end{remark}

\section{The Combinatorial Norm of a Morphism}

In this section the graph functor $\Gamma$ will be applied to set up
a combinatorial quantity for a morphism of schemes, called the norm
of a morphism. For convenience, in the following we will identify a
scheme $X$ with its graph $\Gamma(X)$ and identify a specialization
$X$ with its edge in $\Gamma(X)$.

Notice that here a scheme is not necessarily finite-dimensional
except when otherwise specified.

\subsection{Definition and notation}

By the graph functor $\Gamma$ we have the notion of combinatorial
quantities in a scheme which we borrow from graph theory. Let $X$ be
a scheme. Fixed a specialization $x\rightarrow y$ in $X$ (regarded
as an edge in the graph $\Gamma(X)$).

By a \textbf{restrict chain of specializations }$\Delta \left(
x,y\right) $ (of \textbf{length} $n$) from $x$ to $y$ in $X$, we
understand a chain of specializations
\begin{equation*}
x=x_{0}\rightarrow x_{1}\rightarrow \cdots \rightarrow x_{n}=y
\end{equation*}%
in $X$, where $x_{i}\neq x_{i+1}$ and each $x_{i+1}$ is a
specialization of $x_{i}$ for $0 \leq i \leq n-1$.

The \textbf{length} $l\left( x,y\right) $ of the specialization
$x\rightarrow y$ is the supremum among all the lengths of restrict
chain of specializations from $x$ to $y$. Let $W$ be a subset of
$X$. The \textbf{length} $l\left( W\right)$ of $W$ is defined to be
\begin{equation*}
\sup \{l\left( x,y\right) \mid\text{there is a specialization }%
x\rightarrow y \text{ in }W\}.
\end{equation*}
In particular, the \textbf{length} $l(x)$ of a point $x\in X$ is
defined to be the length of the subspace ${Sp}(x)$ in $X$.

Let $l(W)<\infty$. A restrict chain $\Delta$ of specializations in $W$ is a \textbf{%
presentation} for the length $l(W)$ of $W$ if the length of $\Delta$ is equal to $%
l\left( W\right) .$

Let $\dim X < \infty$. We have $l\left( X\right)=\dim X$. Moreover,
let $\Delta\left( x_{0},x_{n}\right) $ be a presentation for the
length of $X$. Then $x_{0}$ is generic and $x_{n}$ is closed in $X$.

\subsection{The norm of a morphism between schemes}

By the graph functor $\Gamma$ we can define the combinatorial norm
of a morphism between schemes.

\begin{definition}
Let $f:X\rightarrow Y$ be a morphism of schemes.

$\left( i\right) $ $f$ is said to be \textbf{bounded} if there
exists a constant $\beta\in \mathbb{R}$ such that
\begin{equation*}
l\left( f\left( x_{1}\right) ,f\left( x_{2}\right) \right) \leq\beta \cdot
l\left( x_{1},x_{2}\right)
\end{equation*}
holds for any specialization $x_{1}\rightarrow x_{2}$ in $X$ with
$l\left( x_{1},x_{2}\right) <\infty$.

$\left( ii\right) $ Let $f$ be bounded. If $\dim X=0$, define $\left\Vert
f\right\Vert =0$; if $\dim X>0$, define
\begin{equation*}
\left\Vert f\right\Vert =\sup\{\frac{l\left( f\left( x_{1}\right) ,f\left(
x_{2}\right) \right) }{l\left( x_{1},x_{2}\right) }:x_{1}\rightarrow x_{2}%
\text{\emph{\ in }}X,\text{ }0<l\left( x_{1},x_{2}\right) <\infty\}.
\end{equation*}
Then the number $\left\Vert f\right\Vert $ is said to be the \textbf{norm}
of $f$.
\end{definition}

\begin{example}
The norm of a morphism of schemes can be equal to any non-negative
integer.

$\left( i\right) $ The $k-$rational points of a $k-$variety are morphisms of
norm zero.

$\left( ii\right) $ Let $s,t$ be variables over a field $k$ and let
\begin{equation*}
f:Spec\left( k\left[ s,t\right] \right) \rightarrow Spec\left( k\left[ t%
\right] \right)
\end{equation*}
be induced from the evident embedding of $k-$algebras. Then $\left\Vert
f\right\Vert =1.$

$\left( iii\right) $ Let $t$ be a variable over $\mathbb{Q}$ and let
\begin{equation*}
f:Spec\left( k\left[ t\right] \right) \rightarrow Spec\left( \mathbb{Z}\left[
t\right] \right)
\end{equation*}
be induced from the evident embedding. Then $\left\Vert f\right\Vert =2.$
\end{example}

Take a scheme $X$. Two points $x$ and $y$ in $X$ are said to be
$Sp-$\textbf{connected} if either $x\rightarrow y$ or $y\rightarrow
x$
holds in $X$. Otherwise, $x$ and $y$ are said to be $Sp-$\textbf{disconnected%
} if they are not $Sp-$connected.

A nonvoid subset $A$ in $X$ is said to be $Sp-$\textbf{connected} if
any two elements in $A$ are $Sp-$connected.

By the norm of a morphism and the graph functor $\Gamma$ it is seen
that there are two specified types of data, the latitudinal data and
the longitudinal data, for us to describe a morphisms of scheme
(such as \emph{Remarks 2.3-4}).

\begin{remark}
\textbf{(The Latitudinal Data).} Let $f:X\rightarrow Y$ be a
morphism of schemes. There are some cases for the latitudinal data
such as the following for one to describe $f$:

$\left( i\right) $ $f$ is said to be \textbf{level-separated} if the points $%
f\left( x\right) $ and $f\left( y\right) $ are $Sp-$disconnected in
$Y$ for any $x,y\in X$ that are $Sp-$disconnected and of the same
lengths.

$\left( ii\right) $ $f$ is said to be \textbf{level-reduced} if
$f\left( x\right) $ and $f\left( y\right) $ are $Sp-$connected in
$Y$ for any $x,y\in X$ that are $Sp-$disconnected and of the same
lengths.

$\left( iii\right) $ $f$ is said to be \textbf{level-mixed} if $f$ is
neither level-separated nor level-reduced.
\end{remark}

\begin{remark}
\textbf{(The Longitudinal Data).} Let $f:X\rightarrow Y$ be a
morphism of schemes. There are some cases for the longitudinal data
such as the following for one to describe $f$:

$\left(i\right) $ $f$ is said to be \textbf{null} if $\left\Vert
f\right\Vert =0.$

$\left( ii\right) $ $f$ is said to be \textbf{asymptotic} if
$\left\Vert f\right\Vert =1$.

$\left(iii\right) $ $f$ is said to be \textbf{length-preserving} if
$l\left( f\left( x\right) ,f\left( y\right) \right) =h$ holds for
any specialization $x\rightarrow y$ in $X$ such that $l\left(
x,y\right)=h<\infty$.
\end{remark}

\begin{remark}
Let $f:X\rightarrow Y$ be a morphism of schemes.

$\left( i\right) $ Let $1\leq\dim X=\dim Y<\infty $. It is seen that
$\left\Vert f\right\Vert \geq 1$ if $f$ is surjective.

$\left( ii\right) $ Let $f$ be length-preserving. Then $\dim X
\leqslant \dim Y$ and $\left\Vert f\right\Vert =1$ hold. In general,
it is not true that $f$ is injective.

Conversely, let $f$ be injective. In general, it is not true that
$f$ is length-preserving.

$\left( iii\right) $ Let $\left\Vert f\right\Vert =1$. In general,
it is not true that $f$ is injective.

Conversely, let $f$ be injective. In general, it is not true that
$\left\Vert f\right\Vert =1$ holds (See \emph{Corollary 2.11}).
\end{remark}

In the following there will be an extensive study on several
particular morphisms between schemes by means of combinatorial
graphs and their lengths.
 As a common characteristic, all these morphisms have
the norms of not great than one.

\subsection{An application of a length-preserving morphism}

A morphism $f:X\rightarrow Y$ of schemes is said to be of \textbf{finite }$%
J- $\textbf{type} if $f$ is of finite type and the homomorphism $$
f^{\sharp}\mid_{V}:\mathcal{O}_{Y}\left( V\right) \rightarrow f_{\ast}%
\mathcal{O}_{X}\left( U\right)
$$
of rings is of $J-$type for any affine open sets $V$ of $Y$ and $U$ of $%
f^{-1}\left( V\right)$.

Here, a homomorphism $\tau:R\rightarrow S$ of commutative rings is
said to be of $J-$\textbf{type} if there is an identity $$
\tau^{-1}\left( \tau\left( I\right) S\right) =I $$  for every prime
ideal $I$ in $R.$

\begin{proposition}
Let $f:X\rightarrow Y$ be a morphism between irreducible schemes. Suppose $%
\dim X<\infty $. Then we have
\begin{equation*}
\dim X=\dim Y<\infty
\end{equation*}%
if $f$ is length-preserving and of finite $J-$type.
\end{proposition}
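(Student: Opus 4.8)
The plan is to establish the two inequalities $\dim X \leq \dim Y$ and $\dim Y \leq \dim X$ separately. The first is free: since $f$ is length-preserving, \emph{Remark 2.5(ii)} already gives $\dim X \leq \dim Y$ (and $\left\Vert f\right\Vert =1$). Moreover, once the reverse inequality $\dim Y \leq \dim X$ is proved, finiteness of $\dim Y$ is automatic from the hypothesis $\dim X < \infty$, so the whole statement reduces to proving $\dim Y \leq \dim X$. Note in particular that the length-preserving hypothesis is used only for the trivial inequality; all the work of proving $\dim Y \leq \dim X$ must be extracted from the finite $J$-type hypothesis.

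To bound $\dim Y$ I would work affine-locally. Since $\dim Y = \sup\{\dim V : V \subseteq Y \text{ affine open}\}$, it suffices to prove $\dim V \leq \dim X$ for an arbitrary affine open $V = Spec(R)$ of $Y$. Pick an affine open $U = Spec(S)$ contained in $f^{-1}(V)$ — this is legitimate because the $J$-type condition forces $f^{-1}(V) \neq \varnothing$ (a homomorphism into the zero ring is never of $J$-type) — and let $\tau : R \to S$ be the induced homomorphism, which by hypothesis is of $J$-type and of finite type. Because $U$ is an open subscheme of the irreducible scheme $X$, every chain of irreducible closed subsets of $U$ extends by closure to a strictly increasing chain in $X$, so $\dim S = \dim U \leq \dim X$; hence it is enough to show $\dim R \leq \dim S$. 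Reducing modulo nilpotents (which changes neither $Spec$ nor the Krull dimension, and preserves the $J$-type identity since it is a statement about prime ideals) and using that $X$ and $Y$ are irreducible, I may assume $R$ and $S$ are domains; in particular, taking $I=(0)$ in the $J$-type identity, $\tau$ is injective.

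The heart of the matter is then a commutative-algebra statement: if $\tau : R \to S$ is an injective, finite-type, $J$-type homomorphism of domains, then $\dim S \geq \dim R$. The first ingredient is a lying-over property: for every prime $I$ of $R$, the identity $\tau^{-1}(\tau(I)S) = I$ is precisely the injectivity of $R/I \to S/\tau(I)S$, and since $R/I$ is a domain, localizing at its zero ideal yields a prime of $S/\tau(I)S$ whose contraction to $R/I$ is zero, i.e. a prime of $S$ contracting to $I$; thus $Spec(S) \to Spec(R)$ is surjective. The second — and main — ingredient is to upgrade this to a going-up-type statement: every chain of primes $I_0 \subsetneq I_1 \subsetneq \cdots \subsetneq I_m$ of $R$ lifts to a chain $J_0 \subsetneq J_1 \subsetneq \cdots \subsetneq J_m$ of $S$ with $\tau^{-1}(J_k) = I_k$ (strictness of the latter chain being automatic from that of the former), which immediately gives $\dim S \geq \dim R$. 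I would attempt this by induction on $m$, choosing $J_0$ minimal over $\tau(I_0)S$ and passing to the homomorphism $R/I_0 \to S/J_0$; the delicate point, and the step I expect to be the real obstacle, is to show that a sufficiently careful choice of the lifting $J_0$ keeps the quotiented homomorphism in a class to which the same argument applies (a naive choice can fail, as the non-$J$-type localization $k[x] \hookrightarrow k[x,x^{-1}]$ shows), and it is here that the finite-type hypothesis together with the irreducibility of $X$ and $Y$ must be used in an essential way. Granting the chain-lifting, one gets $\dim X \geq \dim S \geq \dim R = \dim V$ for every affine open $V \subseteq Y$, hence $\dim X \geq \dim Y$; combined with the first paragraph, $\dim X = \dim Y < \infty$.
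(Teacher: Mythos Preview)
Your overall strategy matches the paper's exactly: prove $\dim X \le \dim Y$ from length-preservation, then reduce $\dim Y \le \dim X$ to an affine-local chain-lifting statement for the ring map $\tau:R\to S$. The one place you hesitate --- lifting a strict chain $j_{y_0}\subsetneq\cdots\subsetneq j_{y_n}$ of primes of $R$ to a chain in $S$ with the correct contractions --- is precisely the step the paper does \emph{not} prove either: it simply invokes Corollary~2.3 of Sharma~[22] (``A note on lifting of chains of prime ideals''), which supplies exactly this going-up property for $J$-type homomorphisms. So the obstacle you flagged is real, but it is resolved by citation rather than by an argument internal to the paper; once you accept that input, your proof and the paper's are the same. (The paper also uses irreducibility slightly more bluntly, writing $\dim U=\dim X$ and $\dim V=\dim Y$ for a single pair of affine opens rather than taking a supremum, but that is cosmetic.)
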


\begin{proof}
Let $f$ be length-preserving and of finite $J-$type. We have
$$
l\left( X\right) =l\left( f\left( X\right) \right) \leq l\left( Y\right) .
$$
It is seen that $ \dim X\leq\dim Y $ holds since $ \dim X=l\left(
X\right)$ and $\left( Y\right) =\dim Y $ hold.

Let $x\in X$ and $y=f\left( x\right) \in Y.$ As $f$ is of finite $J-$%
type, there are affine open subsets $V$ of $Y$ and $U$ of $f^{-1}\left(
V\right) $ such that
$$
f^{\sharp}\mid_{V}:\mathcal{O}_{Y}\left( V\right) \rightarrow f_{\ast}%
\mathcal{O}_{X}\left( U\right)
$$
is a homomorphism of $J-$type.

Let $V=Spec\left( R\right) $ and $U=Spec\left( S\right)$. We have
$\dim U=\dim X$ and $\dim V=\dim Y$.

Take any restrict chain of specializations
$$
y_{0}\rightarrow y_{1}\rightarrow\cdots\rightarrow y_{n}
$$
in $V.$ By \S 1.2 we obtain a chain of prime ideals
$$
j_{y_{0}}\subsetneqq j_{y_{1}}\subsetneqq\cdots\subsetneqq j_{y_{n}}
$$
in $R,$ where each $j_{y_{i}}$ denotes the prime ideal in $R$ corresponding to the point $%
y_{i}$ in $V$.

By \emph{Corollary 2.3}$^{[22]}$ it is seen that there are a chain
of prime ideals $$ I_{0}\subseteq I_{1}\subseteq\cdots\subseteq
I_{n}
$$ in $S$ such that $$ f^{\#-1}\left( I_{i}\right) =j_{y_{i}}.
$$ It follows that there is a restrict chain of specializations $$
x_{0}\rightarrow x_{1}\rightarrow\cdots\rightarrow x_{n} $$ in $U$
such that $f\left( x_{i}\right) =y_{i}$ and $j_{x_{i}}=I_{i}$.
Hence, $ l\left( U\right) \geq l\left( V\right) $ holds.

It is evident that $$ l(U)=\dim U\text{ and }l(V)=\dim V $$ hold for
the subspaces $U,V$. So we must have $$ \infty>\dim X\geq\dim Y.
$$ This completes the proof.
\end{proof}

\subsection{A sufficient condition to a morphism of norm not
greater than one}

Let $f:X\rightarrow Y$ be a morphism of schemes. Fixed a point
$x_{0}\in X$. Then $f$ is said to be \textbf{Sp-connected} at
$x_{0}$ if the pre-image $f^{-1}\left( Sp\left( f\left(x_{0}\right)
\right) \right)$ is a $Sp-$connected set. And $f$ is said to be
\textbf{Sp-proper} at $x_{0}$ if the pre-image $f^{-1}(Sp\left(
f\left( x_{0}\right) \right) )$ is equal to $Sp\left( x_{0}\right)$.

The morphism $f$ is said to be of \textbf{Sp-type} on $X$ if $f$ is
either Sp-connected or Sp-proper at each point $x\in X$.

In fact, such a datum locally defined by specializations can control
the global behavior of a morphism of schemes$^{[11]}$. For example,
a morphism induced by a homomorphism of Dedekind domains of schemes
is of Sp-type. An isomorphism of schemes is of Sp-type; the converse
is not true.

Here we give a sufficient condition to a morphism of norm not
greater than one.

\begin{theorem}
Let $f:X\rightarrow Y$ be a morphism of schemes. Then we have
\begin{equation*}
0\leq \left\Vert f\right\Vert \leq 1
\end{equation*}%
if $f$ is of Sp-type on $X$.
\end{theorem}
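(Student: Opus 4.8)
The plan is to bound the ratio $l(f(x_1),f(x_2))/l(x_1,x_2)$ above by $1$ for every specialization $x_1\rightarrow x_2$ in $X$ with $0<l(x_1,x_2)<\infty$; this both establishes boundedness (so that $\|f\|$ is defined) and yields $\|f\|\le 1$. Since $\|f\|\ge 0$ is automatic from the definition, that is all that is needed. Fix such a specialization $x_1\rightarrow x_2$, and let $n=l(x_1,x_2)$. Choose a presentation
\begin{equation*}
x_1=z_0\rightarrow z_1\rightarrow\cdots\rightarrow z_n=x_2
\end{equation*}
of this length, so each $z_i\ne z_{i+1}$ and $z_{i+1}$ is a specialization of $z_i$. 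Applying $f$ and using \emph{Lemma 1.5} (any morphism of schemes is specialization-preserving), we get a chain
\begin{equation*}
f(x_1)=f(z_0)\rightarrow f(z_1)\rightarrow\cdots\rightarrow f(z_n)=f(x_2)
\end{equation*}
in $Y$. This chain need not be a restrict chain, because consecutive images $f(z_i)$ and $f(z_{i+1})$ might coincide; after deleting repetitions we obtain a restrict chain from $f(x_1)$ to $f(x_2)$ of length at most $n$, whence $l(f(x_1),f(x_2))\le n=l(x_1,x_2)$. Taking the supremum over all such specializations gives $\|f\|\le 1$, and the hypothesis that $f$ be of Sp-type has not yet been used.

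So where does the Sp-type hypothesis come in? The subtlety is that the argument above bounds $l(f(x_1),f(x_2))$ by $l(x_1,x_2)$ only when the latter is \emph{finite}; boundedness in the sense of \emph{Definition 2.1(i)} requires exactly this, and the deletion-of-repetitions argument handles it directly regardless of Sp-type, so in fact the displayed inequality $0\le\|f\|\le 1$ holds for \emph{every} morphism of schemes. The role of the Sp-type hypothesis must therefore be to guarantee that $\|f\|$ is a well-defined real number in a context where the paper wishes to emphasize the local-to-global principle: being Sp-connected or Sp-proper at each point controls the fibres $f^{-1}(Sp(f(x_0)))$, and one reads off from $Sp$-properness at $x_0$ that $f^{-1}(Sp(f(x_0)))=Sp(x_0)$, so that any restrict chain downstream through $f(x_0)$ lifts to a restrict chain upstream and conversely no chain upstream is collapsed — giving the sharper statement $l(f(x),f(y))=l(x,y)$ locally, and in particular $\|f\|\le 1$. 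I would record the general bound first, then remark that Sp-type yields it cleanly without appeal to deletion arguments, by lifting and projecting chains through the sets $Sp(x_0)$ and $f^{-1}(Sp(f(x_0)))$.

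The main obstacle I anticipate is the bookkeeping at points where $f$ is Sp-connected but not Sp-proper: there $f^{-1}(Sp(f(x_0)))$ is only $Sp$-connected, so a restrict chain through $f(x_0)$ in $Y$ pulls back to a set of pairwise $Sp$-comparable points in $X$, but one must argue that it pulls back to an actual \emph{chain} (a linearly $Sp$-ordered subset) and that distinctness of the $f(z_i)$ forces distinctness — or rather non-collapse — of the lifted $z_i$, so that lengths do not increase under $f$. Here one uses that $Sp$-connectedness plus \emph{Lemma 1.1} (generic specialization forces equality) makes the $Sp$-relation a partial order on $X$, hence a comparable family is a chain; then a length-$m$ restrict chain below $f(x_0)$ cannot be the image of anything longer, because $f$ collapses an edge $z_i\rightarrow z_{i+1}$ only when $f(z_i)=f(z_{i+1})$, which shortens rather than lengthens. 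Assembling these local estimates over all $x_0\in X$ and taking the supremum completes the proof that $0\le\|f\|\le 1$.
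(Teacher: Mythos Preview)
There is a genuine gap in your argument, and it occurs at the very first step. You claim that from a presentation $x_1=z_0\rightarrow z_1\rightarrow\cdots\rightarrow z_n=x_2$ one obtains, after deleting repetitions in the image chain, a restrict chain from $f(x_1)$ to $f(x_2)$ of length at most $n$, and you conclude $l(f(x_1),f(x_2))\le n$. But $l(f(x_1),f(x_2))$ is by definition the \emph{supremum} of the lengths of \emph{all} restrict chains from $f(x_1)$ to $f(x_2)$ in $Y$; exhibiting one chain of length $\le n$ says nothing about that supremum. A longer chain from $f(x_1)$ to $f(x_2)$ may well exist in $Y$ without being the image of any chain in $X$. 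Indeed, Example~2.2(iii) of the paper gives the morphism $f:\operatorname{Spec}(\mathbb{Q}[t])\rightarrow\operatorname{Spec}(\mathbb{Z}[t])$ induced by the inclusion, with $\|f\|=2$: a specialization of length $1$ in the source maps to a pair of points in the target between which there sits a restrict chain of length $2$. So your assertion that ``the displayed inequality $0\le\|f\|\le 1$ holds for \emph{every} morphism of schemes'' is false, and the Sp-type hypothesis is not decorative.

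The paper's proof uses the Sp-type hypothesis in an essential way, and in the direction opposite to your deletion argument. Rather than pushing a chain from $X$ down to $Y$ and hoping it stays long enough to witness the supremum, the paper shows (Step~1) that if $x_1\rightarrow x_2$ is a \emph{closest} specialization in $X$ then $f(x_1)\rightarrow f(x_2)$ is closest in $Y$: one supposes there is an intermediate point $y_0$ strictly between $f(x_1)$ and $f(x_2)$ in $Y$, and then uses Sp-properness or Sp-connectedness at $x_1$ (and at the lift $x_0$ of $y_0$) to produce a point $x_0\in X$ with $x_1\rightarrow x_0\rightarrow x_2$ and $x_0\notin\{x_1,x_2\}$, contradicting closeness. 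Only after this lifting argument does one decompose an arbitrary finite-length specialization $x_1\rightarrow x_2$ into closest steps and conclude $l(f(x_1),f(x_2))\le l(x_1,x_2)$. Your later paragraphs gesture toward a lifting argument, but they are attached to the wrong claim (you try to lift chains to establish an equality $l(f(x),f(y))=l(x,y)$, which is stronger than what is being proved and not generally true under Sp-type alone), and they do not repair the initial error.
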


\begin{proof}
If $\dim X=0$ or $\dim Y=0$ we have $\left\Vert f\right\Vert =0$. In
the following we assume $\dim X>0$ and $\dim Y>0.$

Fixed any specialization $x_{1}\rightarrow x_{2}$ in $X$ such that
$$ 0<\left( x_{1},x_{2}\right) <\infty$$ and $$l\left( f\left(
x_{1}\right) ,f\left( x_{2}\right) \right) >0.$$ We will proceed in
two steps.

\textsl{\textbf{Step 1.}} Let $x_{1}\rightarrow x_{2}$ in $X$ be a
closest specialization. In the following we will prove that the
specialization $f\left( x_{1}\right) \rightarrow f\left(
x_{2}\right) $ in $Y$ is also closest.

Hypothesize that the specialization $f\left( x_{1}\right)
\rightarrow f\left( x_{2}\right) $ in $Y$ is not a closest one. It
follows that for the length we have $$ l\left( f\left( x_{1}\right)
,f\left( x_{2}\right) \right) \geq2 .$$ Then $$ Sp\left( f\left(
x_{1}\right) \right) \supsetneqq Sp\left( f\left( x_{2}\right)
\right);$$
$$l(Sp\left( f\left( x_{1}\right) \right))=\dim Sp\left( f\left(
x_{1}\right) \right) \geq 2.$$

Take a point $y_{0}\in Y$ such that $$f\left( x_{1}\right) \not
=y_{0}\not =f\left( x_{2}\right)$$ and that there are
specializations
$$
f\left( x_{1}\right) \rightarrow y_{0}\rightarrow f\left(
x_{2}\right)$$ in $Y$. We have
$$
Sp\left( f\left( x_{1}\right) \right) \supsetneqq Sp\left( y_{0}\right)
\supsetneqq Sp\left( f\left( x_{2}\right) \right).
$$

As $f$ is of Sp-type, it is seen that there are two cases for the
point $x_{1}$.

\textsl{\textbf{Case (i).}} Let $f$ be Sp-proper at $x_1$. That is,
$Sp(x_{1})=f^{-1}(Sp(f(x_{1})))$.

Then $f(Sp(x_{1}))=Sp(f(x_{1}))$ holds. As $y_{0}\in Sp(f(x_{1}))$,
it is seen that there is a point $ x_{0}\in Sp(x_{1}) $ such that
$y_{0}=f(x_{0})$. Hence, we obtain a specialization
$x_{1}\rightarrow x_{0}$ in $X$.

Similarly, there are two subcases for the point $x_{0}$ such as the
following:

\textsl{\textbf{Subcase (i$_{a}$).}} Assume
$Sp(x_{0})=f^{-1}(Sp(f(x_{0})))$.

As $f(x_2) \in Sp(f(x_0))$, it is seen that the point $x_2$ is
contained in the set $Sp(x_0)$; then we have a specialization
$x_{0}\rightarrow x_{2}$ in $X$. So there are specializations
$$
x_{1}\rightarrow x_{0}\rightarrow x_{2}
$$
in $X$.

\textsl{\textbf{Subcase (i$_{b}$).}} Assume that
$f^{-1}(Sp(f(x_{0})))$ is a $Sp-$connected set.

Then either $x_{0}\rightarrow x_{2}$ or $x_{2}\rightarrow x_{0}$ is
a specialization in $X$; by Proposition 2.5 it is seen that only
$x_{0}\rightarrow x_{2}$ holds in $X$ since $y_{0}\not= f(x_{2})$
and $y_{0}\rightarrow f(x_{2})$; then there are specializations
\begin{equation*}
x_{1}\rightarrow x_{0}\rightarrow x_{2}
\end{equation*}
in $X$.

\textsl{\textbf{Case (ii).}} Let $f$ be Sp-connected at $x_1$. That
is, $f^{-1}(Sp(f(x_{1})))$ is a $Sp-$connected set.

As $y_{0}\in Sp(f(x_{1}))$, we have $ x_{0}\in Sp(x_{1}) $ with
$y_{0}=f(x_{0})$. As $y_{0}\not= f(x_{1})$, it is seen that there is
a specialization $x_{1}\rightarrow x_{0}$ in $X$. As $x_{2}\in
Sp(x_{1})$ and $y_{0} \neq f(x_2)$, we have a specialization
$x_{0}\rightarrow x_{2}$ in $X$; then we obtain specializations
\begin{equation*}
x_{1}\rightarrow x_{0}\rightarrow x_{2}.
\end{equation*}

From the above cases, we must have $ l\left( x_{1},x_{2}\right) \geq
2. $ Hence, $x_{1}\rightarrow x_{2}$ in $X$ is not a closest
specialization, which is in contradiction to the assumption.
Therefore, $ f(x_{1})\rightarrow f(x_{2}) $ must be a closest
specialization in $Y$.

\textsl{\textbf{Step 2.}} Let $x_{1}\rightarrow x_{2}$ in $X$ be not closest. Put $%
l\left( x_{1},x_{2}\right) =n \geq 2. $ There are the closest
specializations
\begin{equation*}
z_{1}\rightarrow z_{2}\rightarrow\cdots\rightarrow z_{n+1}
\end{equation*}
in $X$ with $z_{1}=x_{1}$ and $z_{n+1}=x_{2}.$

It is seen that either for some $1\leq i\leq n$ there is an identity
$$
f\left( z_{i}\right) =f\left( z_{i+1}\right)
$$
or
$$
f\left( z_{1}\right) \rightarrow f\left( z_{2}\right) \rightarrow
\cdots\rightarrow f\left( z_{n+1}\right)
$$ are a restrict chain of specializations
in $Y.$ By \textsl{Step 1} we have
\begin{equation*}
l\left( f\left( x_{1}\right) ,f\left( x_{2}\right) \right) \leq l\left(
x_{1},x_{2}\right).
\end{equation*}
This proves $ \left\Vert f\right\Vert \leq 1. $
\end{proof}

\subsection{A theorem on the comparison  between injective and
length-preserving morphisms of schemes}

A scheme $X$ is said to be \textbf{cat\'{e}naire} if the underlying
space of $X$ is a cat\'{e}naire space. For cat\'{e}naire spaces, see
\emph{ch 8, \S 1} of [1].

Let $x\rightarrow y\rightarrow z$ be specializations in a
cat\'{e}naire scheme. It is clear that
$$l(x,z)=l(x,y)+l(y,z)$$ holds by definition for cat\'{e}naire
space$^{[1]}$.

Now we obtain a result on the comparison between injective and
length-preserving morphisms of schemes.

\begin{theorem}
Let $f:X\rightarrow Y$ be a morphism of irreducible schemes. Suppose $\dim
Y<\infty $.

Let $f$ be injective and of Sp-type. Then $%
f$ is length-preserving and level-separated.

Conversely, let $X$ be cat\'{e}naire. Then $f$ is injective if $f$
is length-preserving and level-separated.
\end{theorem}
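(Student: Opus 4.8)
The plan is to prove the two implications separately.

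\textbf{The forward implication.} Assume $f$ is injective and of Sp-type. To get \emph{length-preserving}, fix a specialization $x\to y$ in $X$ with $l(x,y)=h<\infty$. Pushing a restrict chain of length $h$ from $x$ to $y$ through $f$ yields a chain $f(x)=f(x_0)\to\cdots\to f(x_h)=f(y)$ which is again a restrict chain: it is a chain of specializations because $f$ is specialization-preserving (Lemma 1.5), and no two consecutive vertices coincide because $f$ is injective; hence $l(f(x),f(y))\ge h$. On the other hand $\|f\|\le 1$ by Theorem 2.7 (whose proof in fact gives $l(f(x),f(y))\le l(x,y)$ for every finite-length specialization), so $l(f(x),f(y))\le h$, and equality follows. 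To get \emph{level-separated}, take $x,y\in X$ that are $Sp$-disconnected and suppose, for contradiction, that $f(x)\to f(y)$ in $Y$ (the other case is symmetric); then both $x$ and $y$ lie in $f^{-1}(Sp(f(x)))$, and Sp-type applied at the point $x$ gives either $f^{-1}(Sp(f(x)))=Sp(x)$, so $y\in Sp(x)$, i.e.\ $x\to y$, or $f^{-1}(Sp(f(x)))$ is $Sp$-connected, so $x$ and $y$ are $Sp$-connected. Both contradict $Sp$-disconnectedness, so $f(x),f(y)$ must be $Sp$-disconnected.

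\textbf{The converse.} Assume $X$ caténaire and $f$ length-preserving and level-separated; I want $f$ injective. First I would record that $\dim X\le\dim Y<\infty$ by Remark 2.5(ii), so all lengths in $X$ are finite (bounded by $\dim X$) and maximal-length restrict chains exist. Suppose $f(a)=f(b)$ with $a\ne b$. If $a,b$ are $Sp$-connected, say $a\to b$, then $l(a,b)<\infty$ and length-preserving gives $l(a,b)=l\bigl(f(a),f(b)\bigr)=l\bigl(f(a),f(a)\bigr)=0$, the last equality because by Lemma 1.1 there is no nontrivial restrict chain from a point to itself; hence $a=b$, a contradiction.

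So $a,b$ are $Sp$-disconnected, and the remaining task is to reach a contradiction with level-separatedness. The key sub-claim is that $l(a)=l(b)$. Suppose not, say $l(a)<l(b)$; pick a maximal-length restrict chain $b=b_0\to b_1\to\cdots\to b_m=b'$ inside $Sp(b)$, with $m=l(b)$. Maximality forces it to be saturated, so each $l(b_i,b_{i+1})=1$, and the additivity $l(x,z)=l(x,y)+l(y,z)$ valid in a caténaire scheme then gives $l(b_j)=m-j$ for every $j$. Put $j:=m-l(a)\in\{1,\dots,m\}$, so $l(b_j)=l(a)$. Now $a$ and $b_j$ are $Sp$-disconnected: $b_j\to a$ would put $a\in Sp(b_j)\subseteq Sp(b)$, contradicting $Sp$-disconnectedness of $a,b$; and $a\to b_j$ would give, taking $w$ realizing $l(b_j)$, that $l(a,b_j)+l(b_j)=l(a,w)\le l(a)$, hence $l(a,b_j)\le 0$ and $a=b_j$, again contradicting $Sp$-disconnectedness of $a,b$ (as then $b\to b_j=a$). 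So level-separated makes $f(a),f(b_j)$ $Sp$-disconnected in $Y$; but $b\to b_j$ gives $f(a)=f(b)\to f(b_j)$, so they are $Sp$-connected --- contradiction. Hence $l(a)=l(b)$, and now level-separated applied to the equal-length $Sp$-disconnected pair $a,b$ makes $f(a),f(b)$ $Sp$-disconnected, contradicting $f(a)=f(b)$ (a point is always $Sp$-connected to itself). Therefore $f$ is injective.

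\textbf{Where the difficulty lies.} The forward implication and the $Sp$-connected case of the converse are routine bookkeeping with Lemmas 1.1, 1.5 and Theorem 2.7. The real work is the $Sp$-disconnected case of the converse: because ``level-separated'' only constrains images of $Sp$-disconnected pairs \emph{of the same length}, it cannot be applied to $a,b$ directly unless $l(a)=l(b)$, and forcing that equality is exactly where caténaire-ness (to obtain additivity of lengths along a saturated chain) and the finiteness $\dim X\le\dim Y<\infty$ (to obtain finite lengths and saturated chains realizing them) are needed.
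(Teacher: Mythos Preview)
Your argument is correct and follows the paper's overall architecture, but you streamline it in two places worth noting. For level-separatedness in the forward direction, the paper first establishes an auxiliary fact $l(f(z))=l(z)$ for every $z\in X$ (its Step~2, a case analysis on Sp-type) and then uses injectivity to conclude (Step~3); your direct observation that Sp-type alone forces any $Sp$-disconnected pair to have $Sp$-disconnected images bypasses both steps, and in fact uses neither injectivity nor the equal-length hypothesis. In the converse's hard subcase ($a,b$ $Sp$-disconnected with unequal lengths), the paper likewise descends to a point of matching length inside $Sp$ of the longer one, but then extracts only $f(x_0)\neq f(y)$ from level-separatedness and threads a length computation through the generic point $\xi$ to finish; you instead use the full $Sp$-disconnectedness of $f(a),f(b_j)$ granted by level-separatedness and immediately contradict it via $f(a)=f(b)\to f(b_j)$, avoiding $\xi$ entirely. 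Both shortcuts are valid and make your proof noticeably shorter than the paper's.
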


\begin{proof}
\textbf{(i).} Prove the first half of the theorem. Let $f$ be
injective and of Sp-type. We will prove that $f$ is
length-preserving and level-separated.

It is seen that $\dim X<\infty$ holds. Otherwise, hypothesize $\dim
X =\infty$. For any $n \in \mathbb{N}$ we have a chain of
irreducible closed subsets
$$X_{n}\subsetneqq X_{n-1}\subsetneqq \cdots\subsetneqq X_{0}$$ in
$X$. By \emph{Remark 1.3}, there are points $v_{j}\in X_{j}$ such
that $Sp(v_{j})=X_{j}$ for $0\leqslant j \leqslant n$. Then we have
a chain of specializations $$v_{0}\rightarrow v_{1}\rightarrow\cdots
\rightarrow v_{n}$$ in $X$. By \emph{Lemma 1.5} it is seen that
there are specializations $$f(v_{0})\rightarrow
f(v_{1})\rightarrow\cdots \rightarrow f(v_{n})$$ in $Y$. Hence, $n
\leqslant l(Y)=\dim(Y)$, where we will obtain a contradiction.

 As $\dim X<\infty$, we have $l(X)=\dim X$. In the following we will proceed in three steps.

\textsl{\textbf{Step 1.}} Show $f$ is length-preserving. Take a
chain of specializations
\begin{equation*}
z_{0}\rightarrow z_{1}\rightarrow\cdots\rightarrow z_{n}
\end{equation*}
in $X$ such that $l(z_{0},z_{n})=n$. We have specializations
\begin{equation*}
f\left( z_{0}\right) \rightarrow f\left( z_{1}\right) \rightarrow
\cdots\rightarrow f\left( z_{n}\right)
\end{equation*}
in $Y.$

As $f$ is of Sp-type, by \emph{Theorem 2.7} it is seen that $\|f
\|\leq 1$ and then $l(f(z_0),f(z_n))\leq l(z_0,z_n)=n$; as $f$ is
injective, it is seen that $f\left( z_{i}\right) \not =f\left(
z_{j}\right) $ holds for all $i\not =j$; hence we have
\begin{equation*}
l\left( z_{0},z_{n}\right) =l\left( f\left( z_{0}\right) ,f\left(
z_{n}\right) \right)=n.
\end{equation*}

It follows that
\begin{equation*}
l\left( x,y\right) =l\left( f\left( x\right) ,f\left( y\right) \right)
\end{equation*}
holds for any specialization $x\rightarrow y$ of finite length. This
proves that $f$ is length-preserving.

\textsl{\textbf{Step 2.}} Show that $ l(f(z))=l(z)<\infty $ holds
for any $z\in X$.

In fact, by \textsl{Step 1} above we have $l(z)\leq l(X)=\dim X
<\infty$ for any $z\in X$.  As $f$ is length-preserving, we have $
l(z)\leq l(f(z)) $ by choosing a presentation of specializations for
the length of the subspace $Sp(z)$.

To prove $ l(z)\geq l(f(z)) $, we have two cases for the point $z$
from the assumption that $f$ is of Sp-type.

\textsl{\textbf{Case (i).}} Let $Sp(z)=f^{-1}(Sp(f(z)))$.

As $l(f(z))\leq l(Y)=\dim Y <\infty$, it is easily seen that there
exists some point $w\in Sp(f(z))$ such that
\begin{equation*}
l(f(z),w)=l(f(z))
\end{equation*}
by taking a presentation of specializations for the length of the subspace $%
Sp(f(z))$. Take a point $u\in Sp(z)$ such that $w=f(u)$. As we have
proved in \textsl{Step 1} that $f$ is length-preserving, we obtain
\begin{equation*}
l(f(z))=l(f(z),w)=l(z,u)\leq l(z).
\end{equation*}

\textsl{\textbf{Case (ii).}} Let $f^{-1}(Sp(f(z)))$ be
$Sp-$connected.

Prove $ l(f(z))\leq l(z). $ In deed, if $l(f(z))=0$, we have
\begin{equation*}
l(f(z))=0\leq l(z).
\end{equation*}

Let $l(f(z)) \geq 1$. Hypothesize that there is some point $v\in
Sp(f(z))$ such that
\begin{equation*}
l(f(z)) \geq l(f(z),v) \geq 1+l(z).
\end{equation*}

Take a point $ x\in f^{-1}(Sp(f(z))) $ with $f(x)=v$. As the points
$ x$ and $z$ are both contained in $f^{-1}(Sp(f(z)))$, it is seen
that either $z\rightarrow x$ or $x\rightarrow z$ is a specialization
from the assumption above.

If $z\rightarrow x$ is a specialization, we have $l(f(z),v)=l(z,x)$
since $f$ is length-preserving; then
\begin{equation*}
1+l(z)\leq l(f(z),f(x))=l(z,x)\leq l(z),
\end{equation*}
where there will be a contradiction.

If $x\rightarrow z$ is a specialization, we have a specialization
$f(x)\rightarrow f(z)$ by \emph{Lemma 1.5}; as the point $f(x)=v$ is
contained in the set $Sp(f(z))$, there is a generic specialization $
f(x)\leftrightarrow f(z) $. By \emph{Lemma 1.1} it is seen that
$v=f(z)$ holds. Then we have
\begin{equation*}
0=l(v,v)=l(f(z),v)\geq 1+l(z) \geq 1,
\end{equation*}
where we will obtain a contradiction.

Hence, we must have $ l(f(z))\leq l(z) $. This proves $ l(f(z))=l(z)
$ for any $z\in X$.

\textsl{\textbf{Step 3.}} Show $f$ is level-separated. Let $x_{1},x_{2}\in X$ be $Sp-$%
disconnected with $l\left( x_{1}\right) =l\left( x_{2}\right)$. We have $%
x_{1}\not =x_{2}$ by \emph{Lemma 1.1}.

Then $f\left( x_{1}\right) $ and $f\left( x_{2}\right) $ are $Sp-$%
disconnected. Otherwise, hypothesize that there is a specialization
$ f\left( x_{1}\right) \rightarrow f\left( x_{2}\right) $ in $Y$.
Consider the irreducible closed subsets
\begin{equation*}
Sp\left( f\left( x_{1}\right) \right) \supseteq Sp\left( f\left(
x_{2}\right) \right).
\end{equation*}

By \textsl{Step 2} we have
\begin{equation*}
l(f(x_{1}))=l(x_{1})=l(x_{2})=l(f(x_{2}));
\end{equation*}
then
\begin{equation*}
\dim Sp(f(x_{1}))=l(f(x_{1}))=l(f(x_{2}))=\dim Sp(f(x_{2})) < \infty.
\end{equation*}

It follows that
\begin{equation*}
Sp(f(x_{1}))=Sp(f(x_{2}))
\end{equation*}
holds.

By \emph{Remark 1.3} we have $ f(x_{1})=f(x_{2}) $ as generic points
of the irreducible closed set. As $f$ is injective, we must have
$x_{1}=x_{2}$, where there will be a contradiction to the assumption
above. This proves that $f\left( x_{1}\right) $ and $f\left(
x_{2}\right) $ are $Sp-$disconnected.

\textbf{(ii).} Prove the other half of the theorem. Let $X$ be
cat$\acute{e}$naire and let $f$ be length-preserving and
level-separated. We will prove that $f$ is injective.

It is seen that $\dim X=l(X)<\infty$. In deed, if $\dim X=\infty,$
we have $l(X)=\infty$ and then $\dim Y\geq l(Y)=\infty$ since $f$ is
length-preserving, which is in contradiction to the assumption.

Now fixed any $x,y\in X$. Let $\xi$ be the generic point of $%
X$.  In the following we will prove $f\left(
x\right) \not =f\left( y\right) $ if $x\not =y$.

There are three cases such as the following.

\textsl{\textbf{Case (i).}} Let $\dim X=0$.

We have $x=y$ and of course $f$ is injective.

\textsl{\textbf{Case (ii).}} Let $\dim X>0$ and let $x=\xi$ without
loss of generality.

If $x\not =y$, we have $y\neq\xi$ and then $x\rightarrow y$ is a
specialization in $X$. It follows that
$
l\left( x,y\right) >0
$
holds. As $f$ is length-preserving, We have
\begin{equation*}
l\left( f\left( x\right) ,f\left( y\right) \right) =l\left( x,y\right) >0.
\end{equation*}
Hence, $f\left( x\right) \neq f\left( y\right)$.

\textsl{\textbf{Case (iii).}} Let $\dim X>0,$ $x\neq\xi$ and $y\neq
\xi $.

As $l(X)=\dim X<\infty,$ for any $z\in X$ we have
\begin{equation*}
l\left( z\right) \leq l\left( X\right) <\infty.
\end{equation*}

There are several subcases such as the following.

\textsl{\textbf{Subcase (iii$_{a}$).}} Let $l\left( x\right)
=l\left( y\right) $ and let $x,y$ be Sp-connected.

Assume $y\in Sp\left( x\right) $ without loss of generality. We have
$$Sp(x)\supsetneqq Sp(y);$$ $$\dim (Sp(x))=l\left( x\right) =l\left(
y\right) =\dim (Sp(y)).$$ Then $Sp(x)=Sp(y)$ and hence $x=y$. So we
have $ f\left( x\right) =f\left( y\right)$. Such a subcase is
trivial.

\textsl{\textbf{Subcase (iii$_{b}$).}} Let $l\left( x\right)
=l\left( y\right) $ and let $x,y$ be Sp-disconnected.

We have $x \not= y$. As $f$ is level-separated, we have $f\left(
y\right) \not \in Sp\left( f\left( x\right) \right) $; hence, $$
f\left( x\right) \not =f\left( y\right).$$

\textsl{\textbf{Subcase (iii$_{c}$).}} Let $l\left( x\right)
>l\left( y\right) $ without loss of generality and let $x,y$ be Sp-connected.

We have $x\not= y$. It is clear that only $x \rightarrow y$ is a
specialization. As $f$ is length-preserving, We have
\begin{equation*}
l\left( f\left( x\right) ,f\left( y\right) \right) =l\left( x,y\right) >0.
\end{equation*}
Hence,
\begin{equation*}
f\left( x\right) \neq f\left( y\right) .
\end{equation*}

\textsl{\textbf{Subcase (iii$_{d}$).}} Let $l\left( x\right)
>l\left( y\right) $ without loss of generality and let $x,y$ be Sp-disconnected.

We have $x\not= y$. Prove $f(x)\not =f(y)$.

In fact, choose a presentation $\Gamma (x,u)$ of specializations
\begin{equation*}
x\rightarrow \cdots\rightarrow x_{0}\rightarrow \cdots \rightarrow u
\end{equation*}
in $X$ for the length $l\left( x\right)< \infty $. That is,
$l(x,u)=l(x)$. Here we have some point $%
x_{0}\in Sp\left( x\right) $ such that
\begin{equation*}
l\left( x_{0}\right) =l\left(y\right) <\infty
\end{equation*}
since $l(x)$ and $l(y)$ are nonnegative integers. As $X$ is
cat$\acute{e}$naire, by \emph{Claim 2.9} below we choose $x_0$ to be
the point such that $$l(x_0)=l(x_0,u).$$

It follows that we have
\begin{equation*}
l\left( x,x_{0}\right)=l(x,u)-l(x_{0},u)=l(x)-l(y) >0.
\end{equation*}

Then $x_{0}\neq y$. It is seen that $x_{0}$ and ${y}$ are
$Sp-$disconnected. Otherwise, if $x_0\rightarrow y$ is a
specialization, it is seen that $x\rightarrow y$ is a
specialization, which will be in contradiction to the assumption in
this subcase. If $y\rightarrow x_0$ is a specialization, we have
$$l(x_0)=l(y)=l(y,x_0)+l(x_0)\geq 1+l(x_0)$$ since $X$ is
cat$\acute{e}$naire, where there will be a contradiction.

Thus, $x_{0}$ and ${y}$ are $Sp-$disconnected and of the same length. As $%
f$ is level-separated, we have
\begin{equation*}
f\left( x_{0}\right) \neq f\left( y\right) .
\end{equation*}

By \emph{Claim 2.9} we have
\begin{equation*}
l\left( \xi ,x_{0}\right)=\dim X - l(x_0)=\dim X - l(y) =l\left( \xi,y\right);
\end{equation*}
then
\begin{equation*}
l\left( \xi,y\right) =l\left( \xi,x_{0}\right) =l\left( \xi,x\right) +l\left( x,x_{0}\right).
\end{equation*}

 As $f$ is length-preserving, we have
\begin{equation*}
l\left( f\left( x\right) ,f\left( x_{0}\right) \right) =l\left(
x,x_{0}\right) >0;
\end{equation*}
hence,
\begin{equation*}
\begin{array}{l}
l\left( f\left( \xi\right) ,f\left( y\right) \right) \\
=l\left( \xi,y\right) \\
=l\left( \xi,x_0\right) \\
=l\left( f\left( \xi\right) ,f\left( x_{0}\right) \right) \\
=l\left( f\left( \xi\right) ,f\left( x\right) \right) +l\left( f\left(
x\right) ,f\left( x_{0}\right) \right) \\
\leq l(Y)\\
=\dim Y\\
 <\infty.%
\end{array}%
\end{equation*}

We must have
\begin{equation*}
f\left( x\right) \neq f\left( y\right) .
\end{equation*}
Otherwise, if $f\left( x\right) =f\left( y\right)$, we have
\begin{equation*}
\begin{array}{l}
l\left( f\left( \xi\right) ,f\left( x\right) \right) \\
=l\left( f\left( \xi\right) ,f\left( y\right) \right) \\
=l\left( f\left( \xi\right) ,f\left( x\right) \right) +l\left( f\left(
x\right) ,f\left( x_{0}\right) \right) ;%
\end{array}%
\end{equation*}
then
\begin{equation*}
l(x,x_0)=l\left( f\left( x\right) ,f\left( x_{0}\right) \right) =0;
\end{equation*}
it follows that $x=x_0$ holds, where there will be a contradiction.
Therefore, $f$ is injective.

This completes the proof.
\end{proof}

\begin{claim}
Let $X$ be a cat\'{e}naire and irreducible scheme of finite
dimension. Let $\xi$ be the generic point of $X$.

$(i)$ We have $$l(x_0,x_r)=l(x_0,x_1)+l(x_1,x_2)+\cdots
+l(x_{r-1},x_r)$$ for any chain of specializations $x_0\rightarrow
x_1\rightarrow \cdots \rightarrow x_r$ in $X$.

$(ii)$ For any  point $x$ of $X$, we have
\begin{equation*}
l(\xi ,x)=\dim X -l(x).
\end{equation*}
In particular, we have
\begin{equation*}
l(\xi ,u)=\dim X
\end{equation*}
for any closed point $u$ of $X$.
\end{claim}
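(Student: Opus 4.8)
The plan is to establish $(i)$ by induction on $r$, using the additivity $l(x,z)=l(x,y)+l(y,z)$ that holds for any specializations $x\rightarrow y\rightarrow z$ in a cat\'enaire scheme (the fact recorded just before the statement), and then to derive $(ii)$ from $(i)$. The substantive content of $(ii)$ is the ``in particular'' assertion: every restrict chain of specializations from the generic point $\xi$ to a closed point of $X$ has length exactly $\dim X$; once that is known, the general formula follows by inserting $x$ into such a chain.

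\emph{Proof of $(i)$.} For $r=1$ there is nothing to prove and for $r=2$ this is precisely the quoted additivity. For $r\geq 3$, the chain $x_{0}\rightarrow x_{1}\rightarrow\cdots\rightarrow x_{r-1}$ gives $Sp(x_{r-1})\subseteq Sp(x_{0})$, hence a specialization $x_{0}\rightarrow x_{r-1}$; applying the $r=2$ case to $x_{0}\rightarrow x_{r-1}\rightarrow x_{r}$ and then the induction hypothesis to $x_{0}\rightarrow\cdots\rightarrow x_{r-1}$ yields the formula. (If $x_{i}=x_{i+1}$ for some $i$, then $l(x_{i},x_{i+1})=0$ and the two sides still agree.)

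\emph{Proof of $(ii)$.} First I treat closed points. Let $u$ be a closed point of $X$. Any restrict chain from $\xi$ to $u$ is in particular a restrict chain in $X$, so $l(\xi,u)\leq l(X)=\dim X$. For the opposite inequality, take any restrict chain from $\xi$ to $u$ and refine it — possible since its length is bounded by $\dim X<\infty$ — to a restrict chain admitting no further insertion of a point; it still runs from $\xi$ to $u$, and it cannot be prolonged at either end, since no point specializes properly to the generic point $\xi$ and $u$ is closed. Thus it is a maximal chain in $X$, and by the defining property of a cat\'enaire scheme of finite dimension (\emph{ch.\ 8, \S 1} of [1]) it has length $\dim X$; hence $l(\xi,u)=\dim X$. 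As $l(u)=l(Sp(u))=l(\{u\})=0$, this is exactly the asserted formula for $x=u$. For an arbitrary point $x$: the case $x=\xi$ is trivial (both sides are $0$, since $l(\xi)=l(X)=\dim X$). Otherwise $\xi\rightarrow x$ because $\xi$ is generic. The subscheme $Sp(x)$ is cat\'enaire, irreducible, and of finite dimension $l(x)$, with generic point $x$; since $l(x)<\infty$ the supremum defining its length is attained, and by the observation at the end of \S 2.1 applied to $Sp(x)$ there is a restrict chain $x=x_{0}\rightarrow x_{1}\rightarrow\cdots\rightarrow x_{m}=u$ of length $m=l(x)$ with $u$ closed in $Sp(x)$, hence closed in $X$ because $\overline{\{u\}}\subseteq Sp(x)$. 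Applying $(i)$ to $\xi\rightarrow x\rightarrow x_{1}\rightarrow\cdots\rightarrow u$ gives $l(\xi,u)=l(\xi,x)+l(x,u)$, where $l(x,u)=l(x)$ because the chosen chain realizes $l(x)$ while $l(x,u)\leq l(Sp(x))=l(x)$. Combining with $l(\xi,u)=\dim X$ from the closed-point case yields $l(\xi,x)=\dim X-l(x)$.

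The step I expect to be the crux is the closed-point identity $l(\xi,u)=\dim X$, namely the claim that all maximal chains of irreducible closed subsets of $X$ share the \emph{same} length $\dim X$. This is where the cat\'enaire hypothesis has to be used in its full strength — not merely that two saturated chains with the same endpoints have equal length, but that every globally maximal chain attains $\dim X$; for a general irreducible finite-dimensional scheme with only the former property this identity can fail, so the argument really needs the stronger reading of ``cat\'enaire'' from the cited source. Everything else is bookkeeping with $(i)$ and the definitions of $l(x)$ and $l(W)$.
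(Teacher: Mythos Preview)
Your proof is correct and follows the same route as the paper's: part $(i)$ by induction on $r$ from the two-term additivity, and part $(ii)$ by first establishing $l(\xi,u)=\dim X$ for closed points and then deducing the general formula via $(i)$. Your closing caveat about needing all maximal chains to have length $\dim X$ is well taken; the paper's proof is terser at exactly this point and simply invokes the cat\'enaire property with a citation to Bourbaki.
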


\begin{proof}
${(i)}$ It is immediate by induction on $r$. In fact, take any
specializations $$x\rightarrow y\rightarrow z$$ in $X$. We have
irreducible closed subsets
$$Sp(x)\supseteq Sp(y) \supseteq Sp(z).$$ As $X$ is a cat\'{e}naire
space$^{[1]}$,  we have
$$l(x,z)=l(x,y)+l(y,z).$$

${(ii)}$  Let $u$ be a closed point of $X$. From the property of
cat\'{e}naire spaces$^{[1]}$, we have
\begin{equation*}
l(\xi ,u)=l(X)=\dim X
\end{equation*}
by taking a presentation of specializations for the length $l(X)$.

By $(i)$ it is easily seen that \begin{equation*} l(\xi ,x)=\dim X
-l(x)
\end{equation*}
for any  point $x$ of $X$.
\end{proof}

\begin{corollary}
Let $f:X\rightarrow Y$ be a morphism of schemes and let $ Y$ be of
finite dimension. Then we have
\begin{equation*}
\left\Vert f\right\Vert =1
\end{equation*}%
if $f$ is injective and of Sp-type.
\end{corollary}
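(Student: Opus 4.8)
The plan is to bootstrap directly off the two prior results. Since $f$ is injective and of Sp-type, and $Y$ has finite dimension, the first half of \emph{Theorem 2.8} would normally require $X$ and $Y$ to be irreducible; but even without irreducibility, the argument in \emph{Step 1} of that theorem only used that $f$ is of Sp-type together with \emph{Theorem 2.7}, which gives $\|f\|\leq 1$ unconditionally. So first I would record that $\|f\|\leq 1$ is immediate from \emph{Theorem 2.7}. The remaining task is to produce the reverse inequality $\|f\|\geq 1$, i.e.\ to exhibit a specialization $x_{1}\rightarrow x_{2}$ in $X$ with $0<l(x_{1},x_{2})<\infty$ for which $l(f(x_{1}),f(x_{2}))\geq l(x_{1},x_{2})$.

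The key step is to reduce to a single restrict chain and then invoke injectivity. First I would dispose of the degenerate cases: if $\dim X=0$ then $\|f\|=0$ by \emph{Definition 2.1}, but also $\dim Y=0$ forces nothing — so I should instead observe that when $\dim X=0$ the statement's hypothesis combined with $\dim Y<\infty$ still lets $\|f\|$ be $0$, hence I must assume $\dim X>0$ (and hence there is at least one nontrivial specialization in $X$). Pick a point $z_{0}\in X$ with $l(z_{0})\geq 1$ and choose a restrict chain $z_{0}\rightarrow z_{1}\rightarrow\cdots\rightarrow z_{n}$ in $X$ with $n=l(z_{0},z_{n})\geq 1$ that realizes this length. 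By \emph{Lemma 1.5} these map to specializations $f(z_{0})\rightarrow f(z_{1})\rightarrow\cdots\rightarrow f(z_{n})$ in $Y$. Because $f$ is injective, the $f(z_{i})$ are pairwise distinct, so this is an honest restrict chain of length $n$ in $Y$; therefore $l(f(z_{0}),f(z_{n}))\geq n=l(z_{0},z_{n})>0$. This yields
\begin{equation*}
\frac{l(f(z_{0}),f(z_{n}))}{l(z_{0},z_{n})}\geq 1,
\end{equation*}
hence $\|f\|\geq 1$. Combined with $\|f\|\leq 1$ from \emph{Theorem 2.7}, we conclude $\|f\|=1$.

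The main obstacle is a bookkeeping one rather than a conceptual one: I must be careful that \emph{Definition 2.1(ii)} takes a supremum over specializations with finite positive length, so I need the chain I chose to have finite length — this is guaranteed since $l(z_{0},z_{n})=n<\infty$ is forced by the chain being finite and by $\dim X$ being at most $\dim Y<\infty$ (the latter following because $f$ is of Sp-type and injective, via the argument in \emph{Theorem 2.8} showing $\dim X\leq\dim Y$; alternatively one checks $l(z_{0})\leq l(f(z_{0}))\leq\dim Y<\infty$ using \emph{Lemma 1.5} and injectivity). I should also make explicit the edge case: if $\dim X=0$ there is no nontrivial specialization, $\|f\|=0$ by definition, and the Corollary's conclusion $\|f\|=1$ would fail — so strictly the statement is implicitly assuming $\dim X>0$, matching the convention in \emph{Remark 2.5(i)}; I would flag this or note $\dim X\geq 1$ is part of the intended hypothesis. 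Modulo that normalization, the proof is a two-line consequence of \emph{Theorem 2.7} and \emph{Lemma 1.5} together with injectivity.
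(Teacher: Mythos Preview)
Your proof is correct and follows essentially the same skeleton as the paper's: use \emph{Theorem 2.7} for the upper bound $\|f\|\leq 1$, then produce a specialization witnessing ratio $\geq 1$. The paper does the second step by invoking \emph{Theorem 2.8} to conclude that $f$ is length-preserving, and then reads off $\|f\|=1$ immediately. You instead bypass \emph{Theorem 2.8} and argue directly: pick a single restrict chain, note that injectivity keeps the images distinct, and obtain $l(f(z_0),f(z_n))\geq n=l(z_0,z_n)$.

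Your route is marginally more elementary and, as you correctly observe, sidesteps the irreducibility hypothesis that \emph{Theorem 2.8} carries but the corollary does not explicitly impose. Your flag on the $\dim X=0$ edge case is also apt: the conclusion $\|f\|=1$ presupposes $\dim X>0$, which the paper leaves tacit. Both approaches are short; the paper's is a one-line citation, yours unpacks the relevant piece of the cited argument in place.
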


\begin{proof}
By \emph{Theorem 2.7} we have $ 0\leq \left\Vert f\right\Vert \leq
1$.  As $f$ is length-preserving by \emph{Theorem 2.8}, we have
$$l(x,y)=l(f(x),f(y))$$ for any specialization $x\rightarrow
y$ in $X$. Hence, $\left\Vert f\right\Vert =1$ holds.
\end{proof}

\begin{remark}
There are some concrete examples from commutative rings shows that
the local condition, the Sp-type,  can not be removed from the above
theorems.
\end{remark}

\begin{remark}
The topic on combinatorial norms of morphisms of schemes, discussed
above in the paper, can be applied to study the discrete Morse
theory on arithmetic schemes and Kontsevich's theory of graph
homology.
\end{remark}

\newpage

\end{document}